\newcommand{\cL}{\mathcal{L}}
\newcommand{\calB}{\mathcal{B}}
\newcommand{\irn}{\int_{ℝ^n}}
\newcommand{\iX}{\int_X}
\newcommand{\cLX}[1]{L^{#1}(X)}
\newcommand{\kl}[1]{\left(#1\right)}
\newcommand{\set}[1]{\left\{#1\right\}}
\newcommand{\norm}[2][]{\|#2\|_{#1}}
\newcommand{\Lrn}[1]{L^{#1}(ℝ^n)}
\newcommand{\normlrn}[2]{\norm[\Lrn{#1}]{#2}}
\newcommand{\f}[2]{\frac{#1}{#2}}
\newcommand{\nn}{\nonumber}
\newtheorem{thm}{Theorem}[section]
\newtheorem{lemma}[thm]{Lemma}
\newtheorem{example}{Example}[section]
\title{A Gagliardo--Nirenberg type inequality for rapidly decaying functions}
\author{Marek Fila\footnote{e-mail: fila@fmph.uniba.sk} }
\author{Johannes Lankeit\footnote{e-mail: jlankeit@math.upb.de}}
\affil{\footnotesize Department of Applied Mathematics and Statistics, Comenius University,\\ 
Mlynsk\'a dolina, 84248 Bratislava, Slovakia}
\begin{document}
\maketitle 
\begin{abstract}
\noindent\textbf{Abstract.} We improve the Gagliardo--Nirenberg inequality 
\[
 \|\varphi\|_{L^q(\mathbb{R}^n)} \le C \|\nabla\varphi\|_{L^r(\mathbb{R}^n)} \mathcal{L}^{-(\frac 
1q - \frac{n-r}{rn})} (\|\nabla\varphi\|_{L^r(\mathbb{R}^n)}),
\]
$r=2$, $0<q<\frac{rn}{(n-r)_+}$, $\mathcal{L}$ generalizing $\mathcal{L}(s)=\ln^{-1}\frac 2s$
for $0<s<1$, from 
[M.~Fila and M.~Winkler: A Gagliardo-Nirenberg-type inequality and its applications to decay
estimates for solutions of a degenerate parabolic equation, Adv. Math., 357
(2019), https://doi.org/10.1016/j.aim.2019.106823] for rapidly decaying 
functions ($\varphi\in W^{1,r}(\mathbb{R}^n)\setminus\{0\}$ with finite $K=\int_{\mathbb{R}^n} 
\mathcal{L}(\varphi)$) by specifying the dependence of $C$ on $K$ and by allowing arbitrary 
$r\ge1$.\\

\noindent\textbf{Mathematics Subject Classification (MSC 2010):} 35A23; 26D10\\
\noindent\textbf{Keywords:} Gagliardo--Nirenberg inequality
\end{abstract}

\section{Introduction}
Being able to control the $L^p$ norm of some function for large $p$ by an $L^q$ norm with smaller 
$q$ 
can often be useful. If one can 'invest' some boundedness information on the gradient, this 
is indeed possible, as the Gagliardo--Nirenberg inequality 
(\cite{gagliardo_proprieta,gagliardo_ulteriori_proprieta,nirenberg}) 
asserts:
\begin{equation*}
 \normlrn p{φ}\le C\normlrn q {φ} ^{1-θ}\normlrn r{∇φ}^{θ}
\end{equation*}
for every function $φ\in \Lrn q \cap W^{1,r}(ℝ^n)$ and with $θ=(\f nq -\f np)/(1+\f nq-\f 
nr)\in[0,1]$.
This inequality is frequently used in the analysis of PDEs (see e.g. \cite{brezisbrowder}, 
or, to pick some examples from Genevi\`eve Raugel's work: \cite[(2.21)]{paicu_raugel_rekalo}, 
\cite[(6.14), (8.15)]{raugel_sell}, \cite[proof of (2.32)]{hale_raugel}).
Various extensions of the classical GNI are available, for instance:
It has been studied in Besov spaces (\cite{ledoux,martin_milman}), on Riemannian manifolds 
(\cite{badr_gni_on_manifolds,ceccon_montenegro}), with a BMO term 
(\cite{dao_diaz_nguyen,riviere_strzelecki,kozono_wadade,strzelecki,mccormick_etal}), 
in Orlicz spaces 
(\cite{kalamajska_pietruskapaluba,kalamajska_pietruskapaluba_withoutdoubling,kalamajska_krbec}), for 
noninteger derivatives (\cite{morosi_pizzocchero}), with weighted (\cite{duoandikoetxea_vega}) and 
anisotropic (\cite{esfahani_anisotropic}) terms,  
and extremal functions and optimal constants have been determined 
(\cite{weinstein,delPino_dolbeault_bestconstants,agueh_plaplacian,abreu_ceccon_montenegro,
kozono_sato_wadade}). For a relation with mass transport theory see 
\cite{cordero_erausquin_etal,agueh_masstransport}.\\


Recently, in \cite{filawin}, an inequality of Gagliardo--Nirenberg type has been used to 
rather precisely 
identify temporal decay rates of solutions to the Cauchy problem for $u_t=u^pΔu$, $p\ge 1$, for 
initial data with a certain spatial decay. These rates are optimal up to sublogarithmic corrections.
In order to recall the inequality from \cite{filawin}, we introduce the following:

\textbf{Condition L.} 
We assume that $s_0>0$, $\cL\in C^0([0,∞))\cap C^1((0,s_0))$ is positive, bounded,
nondecreasing on $(0,∞)$, and satisfies the condition 
that there are $a>0$, $λ_0>0$ such that 
\begin{equation}\label{cond:H}
 \cL(s)\le (1+aλ)\cL(s^{1+λ}) \qquad \text{for all } s\in(0,s_0) \text{ and } λ\in(0,λ_0).
\end{equation}

Theorem 1.1 of \cite{filawin} reads: 
\begin{thm}\label{thm:filawin}
Let $\cL$ satisfy Condition L.
For any $n\ge 1$, $K>0$ and $q\in(0,\f{2n}{(n-2)_+})$ there is $C=C(n,q,K,\cL)>0$ such that if $φ\in 
W^{1,2}(ℝ^n)\setminus\set{0}$ is a nonnegative function satisfying $\irn \cL(φ)\le K$, then 
\begin{equation}\label{result-of-filawin}
 \norm[\Lrn q]{φ}\le C \norm[\Lrn2]{∇φ} \cL^{-(\f1q-\f{n-2}{2n})}\kl{\norm[\Lrn2]{∇φ}^2}.
\end{equation}
\end{thm}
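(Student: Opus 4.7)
I would combine two a priori bounds on the super-level sets $\set{φ>t}$ inside the layer-cake representation of $\normlrn q{φ}^q$. The first bound comes from Chebyshev applied to the hypothesis $\irn \cL(φ)\le K$; the second from the Sobolev embedding $W^{1,2}(ℝ^n)\hookrightarrow \Lrn{p^*}$ with $p^*=\f{2n}{n-2}$ (for $n\ge 3$; for $n\le 2$ a Gagliardo--Nirenberg interpolation against a large but fixed exponent serves as a substitute). Splitting the layer-cake integral at a free threshold $T>0$ and optimizing in $T$ should then yield the claim.

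\textbf{Setup.} Starting from $\irn φ^q = q\int_0^∞ t^{q-1}|\set{φ>t}|\,dt$ and using that $\cL$ is nondecreasing, one obtains
\[
 |\set{φ>t}| \le \min\kl{\f{K}{\cL(t)},\ \f{C\normlrn 2{∇φ}^{p^*}}{t^{p^*}}},\qquad t>0.
\]
Applying the first bound on $(0,T]$ and the second on $(T,∞)$ yields
\[
 \irn φ^q \le qK\int_0^T \f{t^{q-1}}{\cL(t)}\,dt + \f{C\normlrn 2{∇φ}^{p^*}}{p^*-q}\, T^{q-p^*}.
\]

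\textbf{Slow variation and optimization.} The heart of the argument is a Karamata-type estimate
\[
 \int_0^T \f{t^{q-1}}{\cL(t)}\,dt \le \f{C(q,\cL)\,T^q}{\cL(T)}
\]
in the relevant range of $T$. Iterating \eqref{cond:H} produces the reverse bound $\cL(s^{1+λ})\ge (1+aλ)^{-1}\cL(s)$ and, by concatenation, $\cL(s^α)/\cL(s)$ can be controlled by a power of $α$; this slow variation of $\cL$ lets $1/\cL(t)$ be absorbed into $1/\cL(T)$ on $(0,T]$ up to a constant. Balancing the two contributions by choosing $T$ so that $T^{p^*}\cL(T)\sim \normlrn 2{∇φ}^{p^*}$ makes both of them comparable to $\normlrn 2{∇φ}^q\,\cL(T)^{q/p^*-1}$, hence $\normlrn q{φ}\le C\normlrn 2{∇φ}\,\cL(T)^{1/p^*-1/q}$.

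\textbf{Finish and main obstacle.} Since $\cL$ is bounded and the optimal $T$ is comparable to $\normlrn 2{∇φ}$ up to a slowly varying factor, one more application of Condition L gives $\cL(T)\asymp \cL(\normlrn 2{∇φ}^2)$, which is \eqref{result-of-filawin}; the regime of large $\normlrn 2{∇φ}$ is trivial since $\cL$ is bounded below on $[s_0,∞)$. I expect the main obstacle to be the slow-variation step: one has to quantify \eqref{cond:H} carefully enough that the constant $C(q,\cL)$ above depends only on $q$, $a$, $λ_0$, $s_0$ and $\cL(s_0)$, so that the final constant indeed depends only on $n,q,K,\cL$. Everything else is a two-parameter optimization plus bookkeeping.
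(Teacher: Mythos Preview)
Your approach is sound and genuinely different from the route taken here. In this paper Theorem~\ref{thm:filawin} is only quoted from \cite{filawin}; what is proved is the sharper Theorem~\ref{thm:gni}, whose $r=2$ case yields Theorem~\ref{thm:filawin}. That proof proceeds in two stages: first an $L^q$--$L^{q_*}$ interpolation lemma (Lemma~\ref{lem:interpolation-Lbounded}), obtained from a \emph{pointwise} splitting $φ^q\le Bφ^{q_*}+\calB\,\cL(φ)$ with optimization in $B$; second, the classical GNI $\normlrn{q_*}{φ}\le c\,\normlrn r{∇φ}^{θ}\normlrn q{φ}^{1-θ}$ is inserted and the resulting inequality solved for $\normlrn q{φ}$. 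Your plan collapses both stages into a single layer-cake argument, bounding $|\{φ>t\}|$ either by $K/\cL(t)$ or by a Sobolev/Chebyshev bound and optimizing the threshold. The Chebyshev bound $|\{φ>t\}|\le K/\cL(t)$ actually appears inside the paper's proof of Lemma~\ref{lem:interpolation-Lunbounded} (the estimate of $\norm[L^q(\{φ\ge s_2\})]{φ}$), so the arguments share ingredients; your packaging is more direct and bypasses the auxiliary $L^{q_*}$ scale. What the paper's two-step structure buys is a dimension-uniform treatment via the classical GNI, whereas your use of the endpoint Sobolev embedding isolates $n\ge 3$; for $n\le 2$ your proposed substitute (GN against a large finite exponent) puts $\normlrn q{φ}$ back on the right-hand side and forces an absorption step you have not spelled out. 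Your ``main obstacle'' --- the Karamata/slow-variation estimate --- is correctly identified and is exactly what Condition~L provides here via Lemmata~\ref{lem:war2.1} and \ref{lem:s1c1} (i.e.\ $s\cL'(s)/\cL(s)\to 0$ and the monotonicity of $s^{-t}\cL(s)$). One small slip: the balancing relation should be $T^{p^*}/\cL(T)\sim \normlrn 2{∇φ}^{p^*}/K$, not $T^{p^*}\cL(T)\sim\normlrn 2{∇φ}^{p^*}$.
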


Herein, the function $\cL$ is to be thought of as a generalization of the 
prototypical examples (cf. Examples \ref{example1}, \ref{example2}):
\begin{equation}\label{examples}
 \cL(s)=\ln^{-κ}\f Ms, \qquad \cL(s)=\ln^{-κ}\ln\f Ms, \qquad 0<s<s_0=1,\quad κ>0,\quad M>e.
\end{equation}
In these cases, $\irn \cL(φ)<\infty$ if, with some positive constants $c_0, α, β, γ$,  
\[
 φ(x) \le c_0 e^{-α|x|^{β}}, \quad \text{ or } \quad φ(x)\le c_0 \exp\kl{-α\exp(β|x|^{γ})}\qquad 
\text{for all } x\inℝ^n.
\]

This theorem apparently leaves open the question \textbf{how} $C$ \textbf{in} \eqref{result-of-filawin} 
\textbf{depends on} $K$.

We will prove: 

\begin{thm}\label{thm:gni}
Let $\cL$ satisfy Condition L.
Let $n\inℕ$, $r\ge 1$, $q\in (0,\f{rn}{(n-r)_+})$ and $\nu:=\f1q-\f{n-r}{rn}$. Then for every $ε>0$ 
there is $C=C(ε,n,q,r,\cL)>0$ 
such that 
\[
 \norm[\Lrn q]{φ} \le C (1+(1+K^{ε})K^{\nu}) \norm[\Lrn r]{∇φ} \cL^{-\nu}\kl{\norm[\Lrn r]{∇φ}^2},
\]
where $K=\irn \cL(φ)$, holds for every nonnegative $φ\in W^{1,r}(ℝ^n)\setminus\set{0}$ with $K<\infty$. 
\end{thm}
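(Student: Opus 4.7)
The strategy is a truncation argument. For a threshold $\tau\in(0,s_0)$ to be fixed later, decompose
\[
 \varphi=\eta+\psi, \qquad \eta:=(\varphi-\tau)_+, \qquad \psi:=\min(\varphi,\tau),
\]
and control each piece separately. The monotonicity of $\cL$ together with Chebyshev's inequality yields the crucial level-set bound $|\{\varphi>t\}|\le K/\cL(t)$ for every $t>0$, which is the only route through which $K$ will enter the estimates.

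For $\eta$: its support has measure at most $K/\cL(\tau)$ and $|\nabla\eta|\le|\nabla\varphi|$ a.e. Schwarz symmetrization, combined with the Sobolev--Poincar\'e inequality on the ball of equal measure, produces $\norm[\Lrn q]{\eta}\le C|\{\varphi>\tau\}|^\nu\norm[\Lrn r]{\nabla\eta}$ uniformly for every admissible $r\ge 1$, whence
\[
 \norm[\Lrn q]{\eta} \le C\kl{\f{K}{\cL(\tau)}}^\nu\norm[\Lrn r]{\nabla\varphi}.
\]
For $\psi$: the layer-cake representation combined with the Chebyshev bound yields $\irn\psi^q\le qK\int_0^\tau t^{q-1}/\cL(t)\,dt$, and the decisive analytic step is the estimate
\[
 \int_0^\tau\f{t^{q-1}}{\cL(t)}\,dt \le C\,\f{\tau^q}{\cL(\tau)},
\]
which is proved by chopping $(0,\tau)$ into the layers $(\tau^{(1+\lambda_0)^k},\tau^{(1+\lambda_0)^{k-1}})$ for $k\ge 1$ and iterating Condition L $k$ times to get $\cL(\tau)/\cL(t)\le(1+a\lambda_0)^k$ on the $k$-th layer; the resulting geometric-type series is dominated by its first term provided $\tau$ lies below a fixed threshold $\tau_*<s_0$. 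This delivers $\norm[\Lrn q]{\psi}\le CK^{1/q}\tau\cL(\tau)^{-1/q}$.

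The two contributions are then balanced by choosing $\tau$ as the solution of $\tau\cL(\tau)^{-\gamma}=K^{-\gamma}\norm[\Lrn r]{\nabla\varphi}$, with $\gamma:=\f1q-\nu=\f{n-r}{rn}$, which equates the two bounds and gives $\norm[\Lrn q]{\varphi}\le CK^\nu\cL(\tau)^{-\nu}\norm[\Lrn r]{\nabla\varphi}$. Invoking Condition L a second time, now read as a slow-variation statement --- polynomial changes of the argument of $\cL$ induce only sub-polynomial, polylogarithmic changes of its value --- we may replace $\cL(\tau)$ by $\cL(\norm[\Lrn r]{\nabla\varphi}^2)$ at the price of a multiplicative factor $C(1+K^\varepsilon)$ for any prescribed $\varepsilon>0$. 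In the complementary regime where the balancing equation would place $\tau$ outside the admissible range $(0,\tau_*)$ we instead fix $\tau=\tau_*$; this regime forces $\norm[\Lrn r]{\nabla\varphi}\ge cK^\gamma$, so the leftover $K^{1/q}=K^\nu K^\gamma$ from $\norm[\Lrn q]{\psi}$ is absorbed into a universal constant times $\norm[\Lrn r]{\nabla\varphi}\cL^{-\nu}(\norm[\Lrn r]{\nabla\varphi}^2)$, which accounts for the leading ``$1$'' in the prefactor of the announced inequality.

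\textbf{Main obstacle.} The heart of the argument is the layer-cake integral bound $\int_0^\tau t^{q-1}/\cL(t)\,dt\le C\tau^q/\cL(\tau)$: it is precisely here that Condition L is used in an essential, quantitative way, and the iteration has to be executed so that the resulting geometric series converges uniformly on the range of admissible $\tau$. The secondary, bookkeeping-type difficulty is the conversion of the $K$-dependence of $\tau$ into the $K^\varepsilon$ slack in the prefactor; this hinges on the slow variation of $\cL$ implied by Condition L, together with the elementary observation that $\log K$ is dominated by $K^\varepsilon$ as $K\to\infty$.
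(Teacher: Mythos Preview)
Your approach is correct and genuinely different from the paper's. The paper first proves an abstract interpolation lemma in Lebesgue spaces,
\[
\norm[\Lrn q]{\varphi}\le C\,\norm[\Lrn{q_*}]{\varphi}\,K^{\frac1q-\frac1{q_*}}(1+K^{\varepsilon})\,\cL^{-(\frac1q-\frac1{q_*})}\bigl(\norm[\Lrn{q_*}]{\varphi}^2\bigr),
\]
for an auxiliary exponent $q_*\in(q,\frac{rn}{(n-r)_+})$, and only then feeds in the classical Gagliardo--Nirenberg inequality $\norm[\Lrn{q_*}]{\varphi}\le c_2\norm[\Lrn r]{\nabla\varphi}^{\theta}\norm[\Lrn q]{\varphi}^{1-\theta}$, absorbing the resulting $\norm[\Lrn q]{\varphi}^{1-\theta}$ on the left via a case split on whether $\norm[\Lrn q]{\varphi}\lessgtr\norm[\Lrn r]{\nabla\varphi}$. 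Your truncation--plus--balancing argument bypasses the auxiliary $q_*$ entirely and is more direct; the paper's route, on the other hand, isolates the purely Lebesgue-space interpolation step as a lemma of independent interest. Both proofs ultimately exploit Condition~L in the same two ways --- as a slow-variation/sub-polynomial growth statement, and via a geometric-series estimate over dyadic (in the logarithm) scales --- just at different places.

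Two points of your write-up deserve tightening. First, the step ``replace $\cL(\tau)$ by $\cL(\norm[\Lrn r]{\nabla\varphi}^2)$'' is more than bookkeeping: from the balancing equation you only know that $\tau$ and $\norm[\Lrn r]{\nabla\varphi}$ are \emph{polynomially} related (with exponents depending on $K$), and converting this into a $(1+K^{\varepsilon})$ factor requires the sharper logarithmic bound $\cL(A^2)/\cL(\tau)\le\bigl(\ln(1/\tau)/\ln(1/A^2)\bigr)^a$ coming from Lemma~\ref{lem:war2.1}, together with an explicit lower bound on $\tau$ extracted from \eqref{estLfrombelow}; the cases $\gamma>0$, $\gamma=0$, $\gamma<0$ behave somewhat differently here. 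Second, your explanation of the ``leading~1'' is off: in your complementary regime the bound on $\norm[\Lrn q]{\psi}$ is absorbed into $CK^{\nu}\norm[\Lrn r]{\nabla\varphi}\cL^{-\nu}(\cdot)$, not into a $K$-free term, so your argument in fact delivers the slightly sharper prefactor $C(1+K^{\varepsilon})K^{\nu}$ without the additive~$1$ --- the paper's extra~$1$ arises only from its separate trivial case $\norm[\Lrn q]{\varphi}\le\norm[\Lrn r]{\nabla\varphi}$.
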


The proof is analogous to that in \cite{filawin} with more careful tracking of constants and 
occasional modifications. 
Moreover, Theorem \ref{thm:gni} generalizes Theorem \ref{thm:filawin} from the case $r=2$ to 
more general values of $r$.

\section{Consequences of Condition L}\label{sec:condition_and_consequences}

As already hinted at in \eqref{examples}, typical cases are given by the following examples. 
\begin{example}\label{example1}
 Let $κ>0$, $M\ge 2$ and 
 \[
  \cL(s):=\begin{cases}
          0, &s=0,\\
          \ln^{-κ}\f Ms, & s\in(0,\f M2),\\
          \ln^{-κ}2, &s\ge \f M2.
         \end{cases}
 \]
\end{example}

\begin{example}\label{example2}
 Let $κ>0$, $M>e$ and $s_0\in[1,\f M e)$ and 
 \[
  \cL(s):=\begin{cases}
          0,&s=0,\\
          \ln^{-κ}\ln\f Ms,&s\in(0,s_0),\\
          \ln^{-κ}\ln \f{M}{s_0}, &s\ge s_0.
         \end{cases}
 \]
\end{example}
For proofs of Condition L being fulfilled cf. \cite[Lemmata 3.9 and 3.11]{filawin}. 

In the following lemmata, we collect some properties that follow from the above condition: 

\begin{lemma}\label{lem:war2.1}
 Assume that $s_0\in(0,1)$ and that $\cL\in C^0([0,s_0))\cap C^1((0,s_0))$ is positive and 
nondecreasing on $(0,s_0)$ and such that \eqref{cond:H} is valid with some $a>0$ and $λ_0>0$. Then 
\[
 \f{s\cL'(s)}{\cL(s)} \le \f{a}{\ln\f1s}\qquad \text{for all } s\in(0,s_0)
\]
and in particular 
\[
 \f{s\cL'(s)}{\cL(s)} \to 0 \qquad \text{as } s\searrow 0.
\]
\end{lemma}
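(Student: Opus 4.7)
The plan is to extract a differential inequality from the discrete condition \eqref{cond:H} by taking $\lambda\searrow 0$ and recognizing the limit as the derivative of $\lambda\mapsto\cL(s^{1+\lambda})$ at $0$. First I would rewrite \eqref{cond:H} as the lower bound $\cL(s^{1+\lambda})\ge\frac{\cL(s)}{1+a\lambda}$, which yields
\[
 \cL(s^{1+\lambda})-\cL(s)\ge -\f{a\lambda\, \cL(s)}{1+a\lambda} \qquad\text{for } s\in(0,s_0),\ \lambda\in(0,\lambda_0).
\]

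Next, dividing by $\lambda>0$ and letting $\lambda\searrow 0$ produces on the left-hand side the one-sided derivative of $\lambda\mapsto \cL(s^{1+\lambda})$ at $\lambda=0$, which by the chain rule (using $\cL\in C^1((0,s_0))$ and $s^{1+\lambda}=s\,e^{\lambda\ln s}$) equals $\cL'(s)\,s\ln s$. The right-hand side tends to $-a\cL(s)$, so
\[
 \cL'(s)\,s\ln s \ge -a\,\cL(s).
\]
Since $s_0<1$ and $s\in(0,s_0)$, we have $\ln s<0$; multiplying by $-1$ and dividing by the positive quantity $\cL(s)\ln\f1s$ then gives the claimed estimate
\[
 \f{s\cL'(s)}{\cL(s)}\le\f{a}{\ln\f1s}.
\]

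For the second assertion, the right-hand side goes to $0$ as $s\searrow 0$, while the monotonicity of $\cL$ forces $\cL'\ge 0$, so $\f{s\cL'(s)}{\cL(s)}\ge 0$; the squeeze theorem then delivers the limit.

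There is no real obstacle here — the only point requiring a little care is the passage to the limit $\lambda\searrow 0$, which is justified by the $C^1$-regularity of $\cL$ on $(0,s_0)$ and the validity of the inequality for all sufficiently small positive $\lambda$; the sign of $\ln s$ has to be tracked to ensure the inequality flips in the expected direction.
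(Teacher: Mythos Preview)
Your argument is correct: differentiating the inequality \eqref{cond:H} in $\lambda$ at $\lambda=0$ via the chain rule, tracking the sign of $\ln s$, and then invoking monotonicity for the lower bound is exactly the right route, and all steps are justified by the stated regularity. The paper itself does not give a proof but simply cites \cite[Lemma~2.1]{filawin}; your write-up supplies the natural direct argument behind that reference.
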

\begin{proof}
 This is \cite[Lemma 2.1]{filawin}. 
\end{proof}

\begin{lemma}\label{lem:s1c1}
 Given $δ\in(0,1]$, $\cL$ satisfying the assumptions of Lemma \ref{lem:war2.1}, $q_*>0$, $q>0$, 
$r>0$ there are 
$s_1\in(0,δ)\cap (0,s_0)$ and $c_1=c_1(\cL,q_*,q,r)>0$ such that 
\begin{equation}\label{estLfrombelow}
 \cL(s) \ge c_1 s^{\f{q_*}2} \qquad \text{for all } s\in(0,s_1)
\end{equation}
and 
\[
 c_1 = s_1^{-\f{q_*}2}\cL(s_1)
\]
and 
\begin{equation}\label{decreasing}
 s\mapsto s^{-t}\cL(s)\quad \text{ is decreasing on } (0,s_1)
\end{equation}
for each $t\in \{q,r\}$.
\end{lemma}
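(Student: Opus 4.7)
The plan is to choose $s_1$ small enough that the logarithmic derivative $s\cL'(s)/\cL(s)$ is strictly smaller than each of $q$, $r$ and $q_*/2$, and then read off all three conclusions from a direct monotonicity computation.

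More precisely, Lemma \ref{lem:war2.1} provides the bound $\f{s\cL'(s)}{\cL(s)}\le \f{a}{\ln(1/s)}$ and the limit relation $\f{s\cL'(s)}{\cL(s)}\to 0$ as $s\searrow 0$. So I would first pick $s_1\in(0,\delta)\cap(0,s_0)$ small enough that
\[
 \f{s\cL'(s)}{\cL(s)} < \min\set{q,\, r,\, \tfrac{q_*}{2}} \qquad \text{for all } s\in(0,s_1);
\]
such an $s_1$ exists because $\delta>0$, $s_0>0$, and the left-hand side tends to $0$.

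Next, for any exponent $t>0$, the function $f_t(s):=s^{-t}\cL(s)$ is $C^1$ on $(0,s_0)$ with
\[
 f_t'(s) = s^{-t-1}\cL(s)\kl{\f{s\cL'(s)}{\cL(s)}-t}.
\]
Applying this with $t=q$ and $t=r$, the choice of $s_1$ forces $f_t'<0$ on $(0,s_1)$, which yields \eqref{decreasing}. Applying it with $t=q_*/2$ shows that $f_{q_*/2}$ is also decreasing on $(0,s_1)$, hence
\[
 s^{-q_*/2}\cL(s) \ge s_1^{-q_*/2}\cL(s_1) =: c_1 \qquad \text{for all } s\in(0,s_1),
\]
which rearranges to \eqref{estLfrombelow} with the prescribed value of $c_1$.

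There is no real obstacle here: the only subtle point is making sure $s_1$ can be chosen independently of $\cL'$ pointwise and only in terms of the quantities allowed by the statement. Since the constant $a$ in Lemma \ref{lem:war2.1} is determined by $\cL$ alone, the smallness threshold for $s_1$ depends only on $\cL$, $q$, $r$, $q_*$, and $\delta$, and then $c_1=s_1^{-q_*/2}\cL(s_1)$ depends on the same data, consistent with the claim that $c_1=c_1(\cL,q_*,q,r)$ (with $\delta$ entering only through $s_1$).
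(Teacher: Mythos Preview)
Your proposal is correct and follows essentially the same approach as the paper: choose $s_1$ small via Lemma~\ref{lem:war2.1} so that $s\cL'(s)/\cL(s)$ is below each of $q$, $r$, $q_*/2$, then differentiate $s^{-t}\cL(s)$ to obtain the monotonicity and the lower bound with $c_1=s_1^{-q_*/2}\cL(s_1)$. Your use of a strict inequality is in fact slightly cleaner, since the statement asks for $s\mapsto s^{-t}\cL(s)$ to be decreasing rather than merely nonincreasing.
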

\begin{proof}
 Lemma \ref{lem:war2.1} guarantees the existence of $s_1\in(0,δ)\cap(0,s_0)$ such that 
$\f{s\cL'(s)}{\cL(s)}\le 
\min\set{q,r,\f{q_*}2}$ for all $s\in(0,s_1)$, meaning that \eqref{decreasing} holds according to 
\[
 \f{d}{ds} \kl{s^{-t}\cL(s)} = s^{-t-1}\kl{s\cL'(s)-t\cL(s)}\le 0 \qquad \text{for all } 
s\in(0,s_1),\; t\in\set{q,r,\f{q_*}2}.
\]
In particular, for every $s\in(0,s_1)$, 
\[
 s^{-\f{q_*}2}\cL(s) \ge s_1^{-\f{q_*}2}\cL(s_1) =: c_1,
\]
which proves \eqref{estLfrombelow}.
\end{proof}

\begin{lemma}\label{was:l2.2}
 Let $s_0\in(0,1)$ and $\cL\in C^0([0,s_0))\cap C^1((0,s_0))$ be positive and nondecreasing on  
$(0,s_0)$ and such that \eqref{cond:H} holds. Then for any $d\in (0,1]$ with 
\[
 C:= d^{a(\ln\f1{s_0})^{-1}}
\]
the inequality 
\[
 \cL(ds)\ge C\cL(s) 
\]
holds for all $s\in(0,s_0)$. 
\end{lemma}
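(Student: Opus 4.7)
My strategy is to integrate the logarithmic derivative estimate supplied by Lemma \ref{lem:war2.1} over the interval $[ds,s]$. Because $d\in(0,1]$ and $s\in(0,s_0)$, we have $0<ds\le s<s_0<1$, so both endpoints lie in the domain where the bound applies and the integrand $\f{1}{t\ln\f{1}{t}}$ is positive. Note also that the borderline case $d=1$ gives $C=1$ and the claim $\cL(s)\ge\cL(s)$ is trivial, so one may as well assume $d<1$.

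By Lemma \ref{lem:war2.1}, $\f{\cL'(t)}{\cL(t)}\le \f{a}{t\ln\f{1}{t}}$ on $(0,s_0)$. Integrating from $ds$ to $s$ and using that $-\ln\ln\f{1}{t}$ is an antiderivative of $\f{1}{t\ln\f{1}{t}}$ on $(0,1)$, one arrives at
\[
 \ln\f{\cL(s)}{\cL(ds)}\;\le\;a\int_{ds}^s\f{dt}{t\ln\f{1}{t}}\;=\;a\ln\f{\ln\f{1}{ds}}{\ln\f{1}{s}}\;=\;a\ln\kl{1+\f{\ln\f{1}{d}}{\ln\f{1}{s}}}.
\]

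Finally I apply the two elementary estimates $\ln(1+x)\le x$ (valid since the argument is $\ge 0$) and $\ln\f{1}{s}\ge\ln\f{1}{s_0}>0$ (valid for $s\in(0,s_0)$), which give $\ln\f{\cL(s)}{\cL(ds)}\le \f{a\ln\f{1}{d}}{\ln\f{1}{s_0}}$; exponentiating and rearranging yields exactly $\cL(ds)\ge d^{a(\ln\f{1}{s_0})^{-1}}\cL(s)=C\cL(s)$.

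I do not anticipate any real obstacle: the only substantive ingredient is the pointwise logarithmic derivative bound, and that is already packaged in Lemma \ref{lem:war2.1}. The rest is a routine calculation, the only subtle point being the direction of the inequality $\ln\f{1}{s}\ge\ln\f{1}{s_0}$, which crucially uses $s<s_0<1$ (and hence relies on the standing hypothesis $s_0\in(0,1)$).
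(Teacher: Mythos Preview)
Your argument is correct: integrating the bound $\cL'(t)/\cL(t)\le a/(t\ln\f1t)$ from Lemma~\ref{lem:war2.1} over $[ds,s]$ and then using $\ln(1+x)\le x$ together with $\ln\f1s\ge\ln\f1{s_0}$ yields exactly the claimed estimate. The paper itself does not reprove this lemma but simply cites \cite[Lemma~2.2]{filawin}; your derivation is the natural self-contained proof and is essentially what lies behind that reference.
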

\begin{proof}
 \cite[Lemma 2.2]{filawin}
\end{proof}

\section{An interpolation lemma in Lebesgue spaces}
At the core of the proof of Theorem \ref{thm:gni} lies the following interpolation result (see 
\cite[Lemma 2.3]{filawin}). Note that for the moment we do not assume boundedness of $\cL$.
\begin{lemma}\label{lem:interpolation-Lunbounded}
Assume that $s_0\in(0,1)$ and $\cL\in C^0([0,∞))\cap C^1((0,s_0))$ is nonnegative, nondecreasing and 
such that \eqref{cond:H} 
holds. Then for any choice of $n\ge 1$, $q_*>0$, $q\in(0,q_*)$ and $ε>0$ one can find 
$C=C(\cL,n,q,q_*,ε)>0$ with the property that the inequality 
\begin{equation}\label{interpolationineq-Lunbounded}
 \norm[\Lrn{q}]{φ}\le C 
\norm[\Lrn{q_*}]{φ}K^{\f1q-\f1{q_*}} \kl{1+K^{ε}}\kl{
1+\cL^ {-(\f1q-\f1{q_*})}(\norm[\Lrn { q_* }]{φ}^2)},
\end{equation}
where $K=\irn\cL(φ)$, 
holds for every nonnegative $φ\in \Lrn{q_*}\setminus\set0$.
\end{lemma}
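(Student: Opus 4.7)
The strategy is to split $\irn \varphi^q$ at a threshold $s_*\in(0,s_1]$, with $s_1$ from Lemma \ref{lem:s1c1}, bound each piece, and then choose $s_*$ so that the two pieces combine into the desired form. For brevity I write $\Phi:=\norm[\Lrn{q_*}]{\varphi}$.

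On $\set{\varphi>s_*}$, since $\cL$ is nondecreasing one has $\cL(\varphi)\ge\cL(s_*)$ there, so a Markov-type argument yields $|\set{\varphi>s_*}|\le K/\cL(s_*)$. Hölder's inequality with the conjugate pair $(q_*/q, q_*/(q_*-q))$ then gives $\int_{\set{\varphi>s_*}}\varphi^q \le \Phi^q\kl{K/\cL(s_*)}^{(q_*-q)/q_*}$. On $\set{\varphi\le s_*}\subseteq\set{\varphi\le s_1}$, Lemma \ref{lem:s1c1} supplies the pointwise estimate $\varphi^{q_*/2}\le c_1^{-1}\cL(\varphi)$; for $q\ge q_*/2$ this combined with $\varphi^{q-q_*/2}\le s_*^{q-q_*/2}$ gives $\int_{\set{\varphi\le s_*}}\varphi^q\le c_1^{-1}s_*^{q-q_*/2}K$. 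For $q<q_*/2$ the preceding factoring fails, so I would instead reapply Lemma \ref{lem:s1c1} with an auxiliary parameter $\tilde q_*\in(0,2q)$, yielding $\cL(t)\ge\tilde c_1 t^{\tilde q_*/2}$ near $0$, and combine this with the layer-cake identity $\int_{\set{\varphi\le s_*}}\varphi^q=q\int_0^{s_*}t^{q-1}|\set{\varphi>t}|\,dt$ and $|\set{\varphi>t}|\le K/\cL(t)$; the integrand $t^{q-1}/\cL(t)\le\tilde c_1^{-1}t^{q-1-\tilde q_*/2}$ is integrable near $0$ precisely because $\tilde q_*/2<q$, giving a bound of the same shape $CKs_*^{q-\tilde q_*/2}$.

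I would then choose $s_*$ as a suitable power of $\Phi$ lying in $(0,\min\set{s_1,\tilde s_1}]$, so that $\cL(s_*)$ becomes comparable to $\cL(\Phi^2)$ — the $\Phi^2$ rather than $\Phi$ originates from the exponent $q_*/2$ in the lower bound $\cL(s_*)\ge c_1 s_*^{q_*/2}$, which when inverted effectively replaces $s_*$ by $s_*^2$ inside $\cL$. This produces a main term of the form $\Phi^q\kl{K/\cL(\Phi^2)}^{(q_*-q)/q_*}$, matching the dominant contribution to \eqref{interpolationineq-Lunbounded} after taking the $q$-th root. In the complementary regime where the intended $s_*$ would exceed $\min\set{s_1,\tilde s_1}$ — i.e.\ $\Phi$ is large — the factor $\cL^{-(1/q-1/q_*)}(\Phi^2)$ is bounded below by a constant and the claim follows from a standard Hölder interpolation between $\Phi$ and $K^{1/q-1/q_*}$.

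The principal technical difficulty is matching the exact form of the right-hand side, in particular the factor $(1+K^{\varepsilon})$. This factor accommodates the loss in the case $q<q_*/2$: because $\tilde q_*$ must be taken strictly below $2q$, the resulting small-value bound carries an exponent on $K$ exceeding $1/q-1/q_*$ by an amount vanishing as $\tilde q_*\nearrow 2q$, which is absorbed into $K^{\varepsilon}$ at the price of a constant depending on $\varepsilon$. Careful bookkeeping of all constants and exponents across both cases $q\ge q_*/2$ and $q<q_*/2$ and across the two sub-regions of integration is the bulk of the remaining technical work.
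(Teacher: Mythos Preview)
Your decomposition at a threshold $s_*$ and the Hölder--Markov bound on $\{\varphi>s_*\}$ are fine and match what the paper does in its Case~I. The gap is in the choice of $s_*$ and in the treatment of the small-values piece. If $s_*$ is taken as a power of $\Phi:=\norm[\Lrn{q_*}]{\varphi}$ alone, your estimate $\int_{\{\varphi\le s_*\}}\varphi^q\le C\,s_*^{\,q-p}\,K$ (with $p=q_*/2$ or $p=\tilde q_*/2$) produces, after taking the $q$-th root, a contribution of order $K^{1/q}$ in $K$. The target \eqref{interpolationineq-Lunbounded}, however, allows at most $K^{1/q-1/q_*+\varepsilon}$. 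The missing factor $K^{1/q_*-\varepsilon}$ cannot be absorbed into any $\Phi$-dependent quantity: consider $\varphi=h\chi_{[0,L]}$ with $h\to 0$ and $L$ chosen so that $K$ stays fixed (e.g.\ $L=1/\cL(h)$); then $\Phi\to 0$ while $K$ is constant, and your small-values bound strictly dominates every term on the right of \eqref{interpolationineq-Lunbounded}. In particular, your explanation of the factor $(1+K^{\varepsilon})$ as compensating for $\tilde q_*<2q$ in the layer-cake argument is not where that factor actually comes from---the exponent of $K$ in your small-piece bound is $1/q$ regardless of $\tilde q_*$.

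What the paper does instead is make the effective threshold depend on $K$ as well as on $\Phi$: it introduces a $K$-dependent scaling $d=(c_1/K)^{2/q_*}$ (for $K\ge c_1$) and works with $\cL(d\Phi^2)$, splitting according to whether $B^{-1/(q_*-q)}=K^{-1/q_*}\Phi\,\cL^{1/q_*}(d\Phi^2)$ lies above or below a fixed $s_2$. In the small-threshold case it uses the pointwise inequality $\varphi^q\le B\varphi^{q_*}+\calB\,\cL(\varphi)$ rather than a crude $\varphi^p\le c^{-1}\cL(\varphi)$ bound, and then invokes Lemma~\ref{was:l2.2} (i.e.\ condition~\eqref{cond:H}) to convert $\cL(d\Phi^2)$ back to $\cL(\Phi^2)$; the price for this conversion is precisely $d^{-a(\ln\frac{1}{s_1})^{-1}}\sim K^{\text{small}}$, and choosing $s_1$ small enough makes this exponent at most $\varepsilon$. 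So the $(1+K^{\varepsilon})$ factor originates from the $K$-dependence of the threshold via Lemma~\ref{was:l2.2}, not from the regime $q<q_*/2$. Your plan can be repaired by letting $s_*$ scale like $\Phi^2 K^{-2/q_*}$ (or, equivalently, by balancing the two pieces), but then you will need Lemma~\ref{was:l2.2} to relate $\cL(s_*)$ to $\cL(\Phi^2)$, and the argument essentially becomes the paper's.
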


\begin{proof}
 We choose $δ\in(0,1)$ such that (with $a$ from conditon \eqref{cond:H} on $\cL$)
 \begin{equation}\label{delta}
  -\kl{\f1q-\f1{q_*}}\f{2a}{q_*\ln δ} < ε
 \end{equation}
 and with $ε_1:=εq_*$ we define $r:=ε_1(\f1q-\f{1-ε_1}{q_*})^{-1}$. To these choices, we apply 
Lemma \ref{lem:s1c1} and obtain $s_1\in(0,δ)$ such that \eqref{decreasing} holds for each $t\in 
\{q,r\}$ and such that (by \eqref{delta})
\begin{equation}\label{eq:s1}
 \kl{\f1q-\f1{q_*}}\kl{1-\f{2a}{q_*\ln s_1}} < \f1q-\f1{q_*}+ε
\end{equation}
as well as $c_1=s_1^{-\f{q_*}2}\cL(s_1)>0$. 

We moreover introduce 
\[
 s_2:=\min\set{s_1,\inf\set{ 
x\cL^{\f1{q_*}}(c_1^{\f2{q_*}}x^2) \mid x\in (0,c_1^{-\f1{q_*}}s_1)} , c_1^{-\f1{q_*}} 
s_1 \cL^{\f1{q_*}}(s_1^2)}.
\]

Let $φ\in L^{q_*}(ℝ^n)$ be nonnegative and such that $φ\not\equiv0$.

We abbreviate 
\[
 K:=\irn \cL(φ). 
\]

If $K\ge c_1$, we let $d:=\kl{\f{c_1}K}^{\f2{q_*}}$ 
and 
$c_2(K):=\kl{\f{c_1}K}^{\f2{q_*}a(\ln \f1{s_1})^{-1}}$.

If $K<c_1$, we let $d:=1$ 
and $c_2:=1$. 

Then, in both of these cases,  
\begin{equation}\label{c2}
 \cL(ds)\ge c_2\cL(s) \qquad \text{for all } s\in(0,s_1)
\end{equation}
as follows from Lemma \ref{was:l2.2}.

Moreover, the definition of $s_2$ ensures that 
\begin{equation}\label{s2le}
 s_2\le K^{-\f1{q_*}} s_1 \cL^{\f1{q_*}}(ds_1^2).
\end{equation}

We let 
\[
 B:=K^{1-\f{q}{q_*}}\norm[\Lrn{q_*}]{φ}^{-(q_*-q)}\cL^{-\f{q_*-q}{q_*}}\kl{d\norm[\Lrn{q_*}]{φ}^2}
\]
and treat the cases $B^{-\f1{q_*-q}}\ge s_2$ (Case I) and $B^{-\f1{q_*-q}}< s_2$ (Case II) 
separately. 

\textbf{Case I}:
We begin the proof for the case of $B^{-\f1{q_*-q}}\ge s_2$ by observing that 
\begin{equation}\label{splitthenorm}
 \norm[\Lrn q]{φ}\le \norm[L^q(\set{φ\ge s_2})]{φ}+\norm[L^q(\set{φ< s_2})]{φ}. 
\end{equation}
Since $\cL$ was assumed monotone, 
\[
 K=\irn \cL(φ) \ge \cL(s_2)\left\lvert\set{φ\ge s_2}\right\rvert,
\]
and thus by Hölder's inequality 
\[
 \int_{\set{φ\ge s_2}} φ^q \le \kl{\int_{\set{φ\ge s_2}} φ^{q_*}}^{\f{q}{q_*}}\left\lvert\set{φ\ge 
s_2}\right\rvert^{\f{q_*-q}{q_*}} \le \kl{\irn 
φ^{q_*}}^{\f{q}{q_*}}\kl{\f{K}{\cL(s_2)}}^{\f{q_*-q}{q_*}}.
\]
We can therefore estimate the first summand on the right of \eqref{splitthenorm} by 
\begin{equation}\label{eq:firstsummand}
 \norm[L^q(\set{φ\ge s_2})]{φ}\le c_3 K^{\f1q-\f1{q_*}} \norm[\Lrn{q_*}]{φ}, 
\end{equation}
where
\[
 c_3 := \cL(s_2)^{-\kl{\f1q - \f1{q_*}}}.
\]

In dealing with the last term in \eqref{splitthenorm}, $\norm[L^q(X)]{φ}$ for $X:= \set{φ<s_2}$, we 
again separate different cases.

\textbf{Case I a)}: $\norm[\Lrn{q_*}]{φ}/\sqrt{s_2} > 1$.\\
Here we will rely on the fact that by \eqref{decreasing} we have 
\[
 \f{φ^r(x)}{\cL(φ(x))} \le c_4 := \f{s_2^r}{\cL(s_2)} \quad \text{for every } x\in X = \set{φ<s_2}. 
\]
From Hölder's inequality and this estimate, namely, we can infer that 
\begin{align}
 \norm[\cLX{q}]{φ} &\le \norm[\cLX{\kl{\f1q-\f{1-ε_1}{q_*}}^{-1}\hspace{-4mm}}]{φ^{ε_1}} 
\norm[\cLX{\f{q_*}{1-ε_1
}}]{φ^{1-ε_1}}\nn\\
&= \kl{\iX φ^{ε_1(\f1q-\f{1-ε_1}{q_*})^{-1}} }^{\f1q-\f{1-ε_1}{q_*}} \kl{\iX 
φ^{q_*}}^{\f{1-ε_1}{q_*}}\nn\\
&\le \kl{\iX φ^{r} }^{\f1q-\f{1-ε_1}{q_*}} \norm[\Lrn 
{q_*}]{φ}^{1-ε_1}\nn\\
&\le \kl{\iX φ^{r} }^{\f1q-\f{1-ε_1}{q_*}} s_2^{-\f{ε_1}2} \norm[\Lrn 
{q_*}]{φ}\nn\\
&\le \kl{c_4\irn \cL(φ) }^{\f1q-\f{1-ε_1}{q_*}} s_2^{-\f{ε_1}2} \norm[\Lrn 
{q_*}]{φ}.\nn
\end{align}
and taken together with \eqref{splitthenorm} and \eqref{eq:firstsummand}, this means 
\begin{equation}\label{end-of-case-I-a}
\norm[\Lrn q]{φ}\le c_3 K^{\f1q-\f1{q_*}} \norm[\Lrn{q_*}]{φ}+c_5 K^{\f1q-\f{1-ε_1}{q_*}} 
 \norm[\Lrn {q_*}]{φ},
\end{equation}
where
\[
 c_5=s_2^{-\f{ε_1}2}c_4^{\f1q-\f{1-ε_1}{q_*}}.
\]

\textbf{Case I b)} If, on the other hand, $\norm[\Lrn{q_*}]{φ}\le \sqrt{s_2}$, then the assumption 
on $B$ implies 
\[
  K^{-\f1{q_*}} \norm[\Lrn{q_*}]{φ}\cL^{\f1{q_*}}\kl{d\norm[\Lrn{q_*}]{φ}^2} \ge s_2
\]
and thereby 
\begin{align*}
K^{-\f1{q_*}} &\norm[\Lrn{q_*}]{φ}\cL^{-(\f1q-\f1{q_*})}(\norm[\Lrn{q_*}]{φ}^2) \\
 &= K^{-\f1{q_*}} \norm[\Lrn{q_*}]{φ} \cL^{\f1{q_*}}\kl{\norm[\Lrn{q_*}]{φ}^2} 
\cL^{-\f1q}\kl{\norm[\Lrn{q_*}]{φ}^2} \\
&\ge K^{-\f1{q_*}} \norm[\Lrn{q_*}]{φ} \cL^{\f1{q_*}}\kl{d\norm[\Lrn{q_*}]{φ}^2} 
\cL^{-\f1q}\kl{\norm[\Lrn{q_*}]{φ}^2}\\
&\ge s_2 \cL^{-\f1q}\kl{\norm[\Lrn{q_*}]{φ}^2} \\
&\ge s_2\cL^{-\f1q}(s_2) \\
&=: c_6^{\f1q}.
\end{align*}
Hence 
\begin{equation}\label{estim:c5K}
 c_6^{\f1q}K^{\f1q} \le K^{\f1q-\f1{q_*}} 
\norm[\Lrn{q_*}]{φ}\cL^{-(\f1q-\f1{q_*})}\kl{\norm[\Lrn{q_*}]{φ}^2}.
\end{equation}

Similarly to the reasoning in case I a), the last summand in \eqref{splitthenorm} can be estimated 
by taking into account the monotonicity property in \eqref{decreasing} for $q$ and that $s_2\le 
s_1$, as these entail 
\[
 \f{φ^q(x)}{\cL(φ(x))} \le \f{s_2^q}{\cL(s_2)}=c_6 \quad \text{for every } x \in X
\]
and hence 
\[
 \int_{\set{φ<s_2}} φ^q \le c_6 \int_{\set{φ<s_2}} \cL(φ) \le c_6 K.
\]
In conclusion, \eqref{splitthenorm} thus is turned into 
\[
 \norm[\Lrn q]{φ}\le c_3K^{\f1q-\f1{q_*}} \norm[\Lrn{q_*}]{φ}+\kl{c_6K}^{\f1q}, 
\]
i.e. due to \eqref{estim:c5K}, 
\begin{equation}\label{end-of-case-I-b}
 \norm[\Lrn q]{φ}\le c_3K^{\f1q-\f1{q_*}}\norm[\Lrn{q_*}]{φ}+K^{\f1q-\f1{q_*}} 
\norm[\Lrn{q_*}]{φ}\cL^{-(\f1q-\f1{q_*})}\kl{\norm[\Lrn{q_*}]{φ}^2}. 
\end{equation}

\textbf{Conclusion of Case I.} 
Combining \eqref{end-of-case-I-a} and \eqref{end-of-case-I-b} we obtain 

\begin{equation}\label{end-of-case-I}
 \norm[\Lrn q]{φ}\le \kl{c_3 + c_5 K^{ε} + 
\cL^{-(\f1q-\f1{q_*})}\kl{\norm[\Lrn{q_*}]{φ}^2}} \norm[\Lrn{q_*}]{φ} K^{\f1q-\f1{q_*}}. 
\end{equation}

\textbf{Case II. }
The proof for the case 
\begin{equation}\label{caseII}
 B^{-\f1{q_*-q}}< s_2
\end{equation}
very closely follows that of \cite[Lemma 2.3]{filawin}, which already led to an estimate without 
nonexplicit constants: 
Since 
\[
 s_2>B^{-\f1{q_*-q}} =
K^{-\f{1}{q_*}}\norm[\Lrn{q_*}]{φ}\cL^{\f{1}{q_*}}\kl{d\norm[\Lrn{q_*}]{φ}^2}
\]
 and $s\mapsto s\cL^{\f1{q_*}}(ds^2)$ is nondecreasing (because $\cL$ is nondecreasing), \eqref{s2le} 
ensures that necessarily 
$\norm[\Lrn{q_*}]{φ}< s_1$.

For every $z\ge B^{-\f1{q_*-q}}$, we have $z^q\le Bz^{q_*}$, whereas for $z\in(0,B^{-\f1{q_*-q}})$ 
by \eqref{caseII} apparently $z<s_2\le s_1$ and hence \eqref{decreasing} entails 
\[
 \f{z^q}{\cL(z)}\le \f{(B^{-\f1{q_*-q}})^q}{\cL(B^{-\f1{q_*-q}})} =: \calB, 
\]
so that 
\[
 φ^q(x)\le Bφ^{q_*}(x)+\calB\cL(φ(x)) \qquad \text{for all } x\in ℝ^n,
\]
and, accordingly, 
\begin{equation}\label{integralsum}
 \irn φ^q \le B\irn φ^{q_*} +\calB K.
\end{equation}
In order to control the last term therein, we first observe that by \eqref{estLfrombelow} 
\begin{align*}
 B^{-\f1{q_*-q}} &= K^{-\f1{q_*}}\norm[\Lrn{q_*}]{φ}\cL^{\f1{q_*}}(d\norm[\Lrn{q_*}]{φ}^2) \ge 
K^{-\f1{q_*}}\norm[\Lrn{q_*}]{φ} c_1^{\f1{q_*}}\sqrt{d}\norm[\Lrn{q_*}]{φ} \\
&= 
\kl{\f{c_1}K}^{\f1q_*} \sqrt{d} \norm[\Lrn{q_*}]{φ}^2 \ge d\norm[\Lrn{q_*}]{φ}^2
\end{align*}
according to the definitions of $B$ and $d$. By the monotonicity of $\cL$, we therefore have 
\[
 \f{B}{\calB K} 
 =\f1K B^{\f{q_*}{q_*-q}} \cL(B^{-\f1{q_*-q}}) = \norm[\Lrn{q_*}]{φ}^{-q_*} 
\f{\cL(B^{-\f1{q_*-q}})}{\cL(d\norm[\Lrn{q_*}]{φ}^2)} \ge \norm[\Lrn{q_*}]{φ}^{-q_*},
\]
which turns \eqref{integralsum} into 
\[
 \irn φ^q \le 2B\irn φ^{q_*}
 =2K^{1-\f{q}{q_*}}\norm[\Lrn{q_*}]{φ}^{q}\cL^{-\f{q_*-q}{q_*}}\kl{d\norm[\Lrn{q_*}]{φ}^2}.
\]
Taking the $q$th root and employing \eqref{c2} we obtain
\[
 \norm[\Lrn q]{φ}\le 2^{\f1q} K^{\f1q-\f 1{q_*}} \norm[\Lrn{q_*}]{φ} 
c_2^{-(\f1q-\f1{q_*})} \cL^{-(\f1q-\f1{q_*})}\kl{\norm[\Lrn{q_*}]{φ}^2}.
\]
If we insert the definition of $c_2$, we see that either 
\begin{equation}\label{end-of-case-II-part-i}
 \norm[\Lrn q]{φ}\le 2^{\f1q} K^{\f1q-\f 1{q_*}} \norm[\Lrn{q_*}]{φ} 
\cL^{-(\f1q-\f1{q_*})}\kl{\norm[\Lrn{q_*}]{φ}^2}
\end{equation}
(namely, if $K<c_1$) or (if $K\ge c_1$), by \eqref{eq:s1}, 
\begin{align}\label{end-of-case-II-part-ii}
 \norm[\Lrn q]{φ} & \le 2^{\f1q} K^{\f1q-\f 1{q_*}} \norm[\Lrn{q_*}]{φ} 
\kl{\f{c_1}{K}}^{-(\f1q-\f1{q_*})\f2{q_*}a(\ln\f1{s_1})^{-1}} 
\cL^{-(\f1q-\f1{q_*})}\kl{\norm[\Lrn{q_*}]{φ}^2}\nn\\
& \le 2^{\f1q} K^{\f1q-\f 1{q_*}} \norm[\Lrn{q_*}]{φ} 
\kl{\f{c_1}{K}}^{-(\f1q-\f1{q_*})\f2{q_*}a(\ln\f1{s_1})^{-1}} 
\cL^{-(\f1q-\f1{q_*})}\kl{\norm[\Lrn{q_*}]{φ}^2}\nn\\
&\le c_7 K^{\f1q-\f 1{q_*}} \norm[\Lrn{q_*}]{φ} 
(1+K^{ε}) 
\cL^{-(\f1q-\f1{q_*})}\kl{\norm[\Lrn{q_*}]{φ}^2},
\end{align}
with 
\[
 c_7:=2^{\f1q}c_1^{-(\f1q-\f1{q_*})\f2{q_*}a(\ln\f1{s_1})^{-1}}. 
\]

\textbf{Conclusion of Cases I and II.} 
Finally, we summarize \eqref{end-of-case-I}, \eqref{end-of-case-II-part-i} and 
\eqref{end-of-case-II-part-ii}: 
\begin{align*}
 \norm[\Lrn q]{φ}\le \max  \bigg\{ &
 \kl{c_3 + c_5 K^{ε} + 
\cL^{-(\f1q-\f1{q_*})}\kl{\norm[\Lrn{q_*}]{φ}^2}} \norm[\Lrn{q_*}]{φ} K^{\f1q-\f1{q_*}}
,
  \\& 2^{\f1q} K^{\f1q-\f 1{q_*}} \norm[\Lrn{q_*}]{φ} 
\cL^{-(\f1q-\f1{q_*})}\kl{\norm[\Lrn{q_*}]{φ}^2},\\
&c_7 K^{\f1q-\f 1{q_*}} \norm[\Lrn{q_*}]{φ} 
(1+K^{ε}) 
\cL^{-(\f1q-\f1{q_*})}\kl{\norm[\Lrn{q_*}]{φ}^2}
 \bigg\},
\end{align*}
which for $C:= \max \set{2^{\f1q},c_3,c_5,c_7}$ turns into \eqref{interpolationineq-Lunbounded}.
\end{proof}

\begin{lemma}\label{lem:interpolation-Lbounded}
Assume that $\cL$ satisfies Condition L. Then for any choice of $n\ge 1$, $q_*>0$, $q\in(0,q_*)$ 
and $ε>0$ one can find 
$C=C(\cL,n,q,q_*,ε)>0$ with the property that the inequality 
\begin{equation}\label{interpolationineq-Lbounded}
 \norm[\Lrn{q}]{φ}\le C 
\norm[\Lrn{q_*}]{φ}K^{\f1q-\f1{q_*}} \kl{1+K^{ε}}\cL^ {-(\f1q-\f1{q_*})}(\norm[\Lrn { q_* }]{φ}^2),
\end{equation}
where $K=\irn\cL(φ)$, 
holds for every nonnegative $φ\in \Lrn{q_*}\setminus\set0$.
\end{lemma}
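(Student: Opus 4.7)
The plan is to obtain Lemma \ref{lem:interpolation-Lbounded} directly from Lemma \ref{lem:interpolation-Lunbounded} by exploiting the extra boundedness of $\cL$ that Condition L provides (over and above the hypotheses of Lemma \ref{lem:interpolation-Lunbounded}). The point is that the only structural difference between \eqref{interpolationineq-Lunbounded} and \eqref{interpolationineq-Lbounded} is the parenthesis $1+\cL^{-(\f1q-\f1{q_*})}(\norm[\Lrn{q_*}]{φ}^2)$ versus $\cL^{-(\f1q-\f1{q_*})}(\norm[\Lrn{q_*}]{φ}^2)$, and boundedness of $\cL$ turns the summand $1$ into a harmless multiplicative constant.

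First, I would check that Lemma \ref{lem:interpolation-Lunbounded} applies to $\cL$ as given: Condition L supplies all required regularity, positivity, monotonicity, and \eqref{cond:H}, and since \eqref{cond:H} is a condition near zero it persists after replacing $s_0$ by $\min\{s_0,\frac12\}$, so we may assume $s_0\in(0,1)$. Writing $\cL_\infty:=\sup_{s\ge 0}\cL(s)<\infty$ (finite by Condition L), Lemma \ref{lem:interpolation-Lunbounded} then yields some $C_0=C_0(\cL,n,q,q_*,ε)$ with
\[
\norm[\Lrn q]{φ}\le C_0\,\norm[\Lrn{q_*}]{φ}\,K^{\f1q-\f1{q_*}}\kl{1+K^{ε}}\kl{1+\cL^{-(\f1q-\f1{q_*})}(\norm[\Lrn{q_*}]{φ}^2)}
\]
for every admissible $\varphi$.

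Next, since $\varphi\not\equiv 0$ the argument $\norm[\Lrn{q_*}]{φ}^2$ is strictly positive, so by positivity and boundedness of $\cL$ we have $0<\cL(\norm[\Lrn{q_*}]{φ}^2)\le\cL_\infty$, which rearranges to
\[
1\le \cL_\infty^{\f1q-\f1{q_*}}\cL^{-(\f1q-\f1{q_*})}(\norm[\Lrn{q_*}]{φ}^2).
\]
Adding $\cL^{-(\f1q-\f1{q_*})}(\norm[\Lrn{q_*}]{φ}^2)$ to both sides and inserting the result into the previous display, the bracket $1+\cL^{-(\f1q-\f1{q_*})}(\norm[\Lrn{q_*}]{φ}^2)$ is controlled by $(1+\cL_\infty^{\f1q-\f1{q_*}})\cL^{-(\f1q-\f1{q_*})}(\norm[\Lrn{q_*}]{φ}^2)$, so choosing $C:=C_0(1+\cL_\infty^{\f1q-\f1{q_*}})$ produces \eqref{interpolationineq-Lbounded}.

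There is no real obstacle here: the proof is essentially an absorption argument, and the only thing worth being careful about is verifying that the hypotheses of Lemma \ref{lem:interpolation-Lunbounded} are met (in particular the possibly required shrinking of $s_0$ into $(0,1)$) and ensuring $\cL(\norm[\Lrn{q_*}]{φ}^2)>0$, which follows from $\varphi\not\equiv 0$ together with the positivity clause in Condition L.
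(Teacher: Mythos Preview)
Your proof is correct and follows essentially the same route as the paper: both use the boundedness of $\cL$ to find a constant $c$ with $1\le c\,\cL^{-(\f1q-\f1{q_*})}(s)$ for all $s>0$, thereby absorbing the extra summand $1$ from \eqref{interpolationineq-Lunbounded} into the $\cL^{-(\f1q-\f1{q_*})}$ factor. Your version is in fact slightly more explicit (you identify $c=\cL_\infty^{\f1q-\f1{q_*}}$ and take care of the possible shrinking of $s_0$ into $(0,1)$), but the idea is identical.
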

\begin{proof}
 As $\cL$ is bounded, there is $c>0$ such that $1\le c\cL^ {-(\f1q-\f1{q_*})}(s)$ for all $s>0$, 
and Lemma \ref{interpolationineq-Lunbounded} immediately implies \eqref{interpolationineq-Lbounded}.
\end{proof}

\section{Proof of the Gagliardo--Nirenberg type inequality}
\begin{proof}[Proof of Theorem \ref{thm:gni}]

Given $q\in(0,\f{rn}{(n-r)_+})$ we fix $q_*\ge 1$ such that $q_*>q$ and $q_*< \f{rn}{(n-r)_+}$. 
We let $θ=\f{\f nq-\f n{q_*}}{1+\f nq - \f nr}\in(0,1]$ and $γ=\f1q-\f1{q_*}$. We choose 
$β\in(0,1)$ and $ε_1>0$ such that 
\[
 \f{γ}{θ}\kl{\f{1}{β}-1}+\f{ε_1}{βθ}= ε
\]

From Lemma \ref{lem:interpolation-Lbounded} we therefore obtain $c_0>0$ such that 
\begin{equation}\label{previouslemma}
 \norm[\Lrn{q}]{φ}\le c_0  
\norm[\Lrn{q_*}]{φ}K^{γ} \kl{1+K^{ε_1}}
\cL^ {-γ}(\norm[\Lrn { q_* }]{φ}^2)
\end{equation}
holds with $γ=\f1q-\f1{q_*}$ for every $φ\in \Lrn{q_*}\setminus\set0$. 

We choose $s_3\in(0,1)$ so small that (in accordance with Lemma \ref{lem:war2.1}) 
\begin{equation}\label{rhoprimenonneg}
 \f{s\cL'(s)}{\cL(s)}\le \f1{2γ} \qquad \text{on } (0,s_3)
\end{equation}
and that, due to \eqref{estLfrombelow}, with a suitable $c_1>0$ and 
$μ:=\f{1-β}{2γ}>0$ we have 
\begin{equation}\label{oneoverLlesomething}
 \cL^{-1}(s)\le c_1s^{-μ} \qquad \text{for all } s\in(0,s_3). 
\end{equation}

We let $c_2\ge 1$ be a constant from the classical Gagliardo--Nirenberg inequality 
\begin{equation}\label{classGNI}
 \norm[\Lrn{q_*}]{φ}\le c_2 \norm[\Lrn r]{∇φ}^{θ}\norm[\Lrn q]{φ}^{1-θ} 
\end{equation}
with $θ=\f{\f nq-\f n{q_*}}{1+\f nq - \f nr}\in(0,1]$.

We moreover introduce 
\begin{equation}\label{defrho}
 ρ(σ):=σ\cL^{-γ}(σ^2),\qquad\qquad σ>0,
\end{equation}
and note that $ρ'(σ)=\cL^{-γ}(σ)\kl{1-2γ\f{σ^2\cL'(σ^2)}{\cL(σ^2)}}\ge 0$ for every 
$σ\in (0,\sqrt{s_3})$ by \eqref{rhoprimenonneg}. 

\textbf{Case I: $\norm[\Lrn q]{φ}>\norm[\Lrn r]{∇φ}$}.\\

\textbf{Case Ia: $c_2\norm[\Lrn r]{∇φ}^{θ}\norm[\Lrn q]{φ}^{1-θ}<\sqrt{s_3}$}.\\ 
If we again abbreviate $K:=\irn \cL(φ)$ and write \eqref{previouslemma} in terms of $ρ$, we obtain 
\begin{align*}
 \norm[\Lrn q]{φ} \le c_0 K^{γ}(1+K^{ε_1})ρ(\norm[\Lrn{q_*}]{φ})
\end{align*}
so that \eqref{classGNI} and monotonicity of $ρ$ on $(0,\sqrt{s_3})$ imply 
\begin{align*}
 \norm[\Lrn q]{φ} &\le c_0 K^{γ}(1+K^{ε_1})ρ(c_2 \norm[\Lrn r]{∇φ}^{θ}\norm[\Lrn q]{φ}^{1-θ} )\\
 &= c_0 K^{γ}(1+K^{ε_1})c_2 \norm[\Lrn r]{∇φ}^{θ}\norm[\Lrn q]{φ}^{1-θ} \cL^{-γ}(c_2^2 
\norm[\Lrn r]{∇φ}^{2θ}\norm[\Lrn q]{φ}^{2(1-θ)})
\end{align*}
and thus after division by $\norm[\Lrn q]{φ}^{1-θ}$ and taking the $θ$th root
\begin{align}\label{gni-case-Ia}
 \norm[\Lrn q]{φ} 
&\le \kl{c_0 K^{γ}(1+K^{ε_1})c_2} ^{\f1{θ}} \norm[\Lrn r]{∇φ}\cL^{-\f{γ}{θ}}(
\norm[\Lrn r]{∇φ}^{2}),
\end{align}
because $\cL$ is monotone, $c_2\ge 1$ and $\norm[\Lrn q]{φ}>\norm[\Lrn r]{∇φ}$. 


\textbf{Case Ib: $c_2\norm[\Lrn r]{∇φ}^{θ}\norm[\Lrn q]{φ}^{1-θ}\ge \sqrt{s_3}$}.\\

Here we let 
\begin{equation}\label{defsigma1}
 σ_1:=\inf ρ^{-1}\kl{\left\{\f{\sqrt{s_3}}{c_2C_*}\right\} },
\end{equation}
where $C_*:= \kl{c_0 K^{γ} \kl{1+K^{ε_1}}}$. 

From $\norm[\Lrn q]{φ}>\norm[\Lrn r]{∇φ}$ and $c_2\norm[\Lrn r]{∇φ}^{θ}\norm[\Lrn q]{φ}^{1-θ}\ge 
\sqrt{s_3}$ we conclude that 
\[
 \norm[\Lrn q]{φ}\ge \f{\sqrt{s_3}}{c_2}
\]
and since $\norm[\Lrn q]{φ}\le C_*ρ(\norm[\Lrn {q_*}]{φ})$ by \eqref{previouslemma}, this 
shows that 
\[
 ρ(\norm[\Lrn {q_*}]{φ}) \ge \f{\sqrt{s_3}}{c_2C_*} 
\]
and hence 
\[
 \norm[\Lrn{q_*}]{φ}\ge \inf ρ^{-1}\kl{\left\{ρ(\norm[\Lrn {q_*}]{φ}\right\}}\ge σ_1,
\]
because $ρ$ with $ρ(0)=0$ is continuous 
and hence $\inf ρ^{-1}(\{x\})\ge \inf ρ^{-1}(\{y\})$ for every $x,y\in(0,∞)$ with $x\ge y$.
Thus due to \eqref{previouslemma}, the monotonicity of $\cL$, \eqref{defrho}, \eqref{classGNI}, 
\eqref{defsigma1}, 
\begin{align*}
 \norm[\Lrn{q}]{φ}&\le c_0 \norm[\Lrn{q_*}]{φ}K^{γ} \kl{1+K^{ε_1}}\cL^ {-γ}(σ_1^2)\\
&\le c_0c_2 \norm[\Lrn{q}]{φ}^{1-θ}\norm[\Lrn r]{∇φ}^{θ} K^{γ} 
\kl{1+K^{ε_1}}\f{ρ(σ_1)}{σ_1}\\
&= c_0c_2 \norm[\Lrn{q}]{φ}^{1-θ}\norm[\Lrn r]{∇φ}^{θ} K^{γ} 
\kl{1+K^{ε_1}}\f{\sqrt{s_3}}{σ_1 c_2C_*}\\
&= c_0 \norm[\Lrn{q}]{φ}^{1-θ}\norm[\Lrn r]{∇φ}^{θ} K^{γ} 
\kl{1+K^{ε_1}}\cdot \f{\sqrt{s_3}}{σ_1 c_0 K^{γ} \kl{1+K^{ε_1}}}\\
&= \norm[\Lrn{q}]{φ}^{1-θ}\norm[\Lrn r]{∇φ}^{θ} 
\f{\sqrt{s_3}}{σ_1},
\end{align*}
i.e. 
\begin{equation}\label{lesqrts3sigma1}
 \norm[\Lrn{q}]{φ}\le \norm[\Lrn r]{∇φ}
\kl{\f{\sqrt{s_3}}{σ_1}}^{\f1{θ}}.
\end{equation}

For $σ\ge \sqrt{s_3}$ we have $ρ(σ)=σ\cL^{-γ}(σ^2) \le σ \cL^{-γ}(s_3)$, whereas for 
$σ<\sqrt{s_3}\le 1$ 
using \eqref{oneoverLlesomething} 
we see that 
\[
 ρ(σ)=σ\cL^{-γ}(σ^2) 
 \le  c_1 σ^{1-2μγ} = c_3σ^{β}.
\]

Therefore (and due to $ρ(σ_1)=\f{\sqrt{s_3}}{c_2C_*}$)
\[
 \f1{σ_1}\le \begin{cases}
              \f{c_2C_*}{\sqrt{s_3}}\cL^{-γ}(s_3),&\text{if } σ_1\ge \sqrt{s_3},\\
              \kl{\f{c_3c_2C_*}{\sqrt{s_3}}}^{\f1{β}},&\text{if } σ_1<\sqrt{s_3}.
             \end{cases}
\]
In particular, 
\begin{align}\label{s3oversigma1}
 \kl{\f{\sqrt{s_3}}{σ_1}}^{\f1{θ}}&\le \begin{cases}
                                       \cL^{-\f{γ}{θ}}(s_3) 
(c_2C_*)^{\f1{θ}},&\text{if } 
σ_1\ge \sqrt{s_3},\\
(\sqrt{s_3})^{(1-\f1{β})\f1{θ}} (c_3c_2C_*)^{\f1{βθ}},&\text{if } σ_1<\sqrt{s_3},
                                      \end{cases}\nn\\
&\le c_4C_*^{\f1{θ}}(1+C_*^{\f1{θβ}-\f1{θ}}), 
\end{align}
where we set $c_4:=\max\set{c_2^{\f1{θ}}\cL^{-\f{γ}{θ}}(s_3),(\sqrt{s_3})^{(1-\f1{β})\f1{θ}} 
(c_2c_3)^{\f1{βθ}}}$.

If we moreover use that $C_*=(c_0 K)^{γ} \kl{1+K^{ε_1}}$, we obtain 
\begin{align}\label{cstaretc}
 C_*^{\f1{θ}}(1+C_*^{\f1{θβ}-\f1{θ}}) &\le \kl{c_0K}^{\f{γ}{θ}} \kl{1+K^{ε_1}}^{\f1{θ}} + 
\kl{c_0K}^{\f{γ}{θβ}}\kl{1+K^{ε_1}}^{\f1{θβ}}\nn\\
&\le \max\set{2^{\f1{θ}}c_0^{\f{γ}{θ}},c_0^{\f{γ}{βθ}}2^{\f1{βθ}}} 
\kl{K^{\f{γ}{θ}}\kl{1+K^{\f{ε_1}{θ}}} + 
K^{\f{γ}{θβ}}\kl{1+K^{\f{ε_1}{βθ}}}}\nn\\
&\le 3 \max\set{2^{\f1{θ}}c_0^{\f{γ}{θ}},c_0^{\f{γ}{βθ}}2^{\f1{βθ}}} K^{\f{γ}{θ}} 
\kl{1+K^{\f{γ}{θ}(\f1{β}-1)+\f{ε_1}{βθ}}}\nn\\
&= 3 \max\set{2^{\f1{θ}}c_0^{\f{γ}{θ}},c_0^{\f{γ}{βθ}}2^{\f1{βθ}}} K^{\f{γ}{θ}} 
\kl{1+K^{ε}}
\end{align}

If we merge \eqref{lesqrts3sigma1} with \eqref{s3oversigma1} and \eqref{cstaretc}, 
we thus have 
\begin{align}\label{gni-case-Ib}
 \norm[\Lrn{q}]{φ}&\le
      c_5 \norm[\Lrn r]{∇φ} K^{\nu} (1+K^{ε}) \cL^{-ν}\kl{\norm[\Lrn r]{∇φ}^2},
\end{align}
where $c_5=3c_4 
\max\set{2^{\f1{θ}}c_0^{\f{γ}{θ}},c_0^{\f{γ}{βθ}}2^{\f1{βθ}}}\norm[L^{∞}((0,∞))]{\cL}^{ν}$.

\textbf{Case II: $\norm[\Lrn q]{φ}\le \norm[\Lrn r]{∇φ}$}.\\ 
We let $\cL_\infty=\norm[L^{∞}((0,∞))]{\cL}$ and obtain the following obvious estimate: 
\begin{equation}\label{gni-case-II}
 \norm[\Lrn q]{φ}\le \cL^{ν}_{∞} \cL^{-\nu}(\norm[\Lrn r]{∇φ}^2) \norm[\Lrn r]{∇φ}.
\end{equation}

The combination of \eqref{gni-case-Ia}, \eqref{gni-case-Ib} and \eqref{gni-case-II} yields the 
theorem.
\end{proof}

\textbf{Acknowledgements.} The first author was supported in part by the Slovak
Research and Development Agency under the contract No. APVV-18-0308 and by the VEGA grant
1/0347/18. 

\end{document}